\newcommand{\newabstract}[1]{%
  \par\bigskip
  \csname otherlanguage*\endcsname{#1}%
  \csname captions#1\endcsname
  \item[\hskip\labelsep\scshape\abstractname.]
}
\newcommand{\jump}{\vskip 2mm}
\newtheorem{proposition}{Proposition}
\newtheorem{theorem}{Theorem}
\newtheorem*{theorem*}{Theorem}
\newtheorem{lemma}{Lemma}
\newtheorem{corollary}{Corollary}
\newtheorem{ques}{Question}
\begin{document}
\title[Minimal Tjurina number of irreducible germs]{The minimal Tjurina number of irreducible germs of plane curve singularities}
\author[M. Alberich-Carramiñana]{Maria Alberich-Carramiñana}
\author[P. Almirón]{Patricio Almirón}
\author[G. Blanco]{Guillem Blanco}
\author[A. Melle-Hernández]{Alejandro Melle-Hernández}
\keywords{Curve singularities, Tjurina number, Milnor number}
\subjclass[2010]{Primary 14H20; Secondary 14H50, 32S05}
\thanks{The first and third authors were supported by Spanish Ministerio de Ciencia, Innovaci\'{o}n y Universidades MTM2015-69135-P, Generalitat de Catalunya 2017SGR-932 projects, and they are with the Barcelona Graduate School of Mathematics (BGSMath), through the project MDM-2014-0445. The second and fourth authors were supported by Spanish Ministerio de Ciencia, Innovaci\'{o}n y Universidades MTM2016-76868-C2-1-P}

\address{Institut de Rob\`otica i Inform\`atica Industrial (IRI, CSIC-UPC)\\
 Departament\vskip 0.5mm de Matemàtiques\\
Universitat Politècnica de Catalunya · {BarcelonaTech} \\
\vskip 0.5mm Av. Diagonal 647, Barcelona 08028, Spain.}
\email{maria.alberich@upc.edu}

\address{Instituto de Matemática Interdisciplinar (IMI), Departamento de \'{A}lgebra, Geometr\'{i}a \vskip 0.5mmy Topolog\'{i}a\\
 Facultad de Ciencias Matem\'{a}ticas\\
 Universidad Complutense de Madrid\\\vskip 0.5mm
 28040, Madrid, Spain.}
\email{palmiron@ucm.es}

\address{Departament de Matemàtiques\\
Universitat Politècnica de Catalunya \vskip 0.5mm· {BarcelonaTech} \\
Av. Diagonal 647, Barcelona 08028, Spain.}
\email{guillem.blanco@upc.edu}

 \address{Instituto de Matemática Interdisciplinar (IMI), Departamento de \'{A}lgebra, Geometr\'{i}a \vskip 0.5mmy Topolog\'{i}a\\
 Facultad de Ciencias Matem\'{a}ticas\\
 Universidad Complutense de Madrid\\\vskip 0.5mm
 28040, Madrid, Spain.}
 \email{amelle@mat.ucm.es}

\selectlanguage{english}
\maketitle
\vspace{-4mm}
{\centering\footnotesize \emph{Dedicated with admiration to András Némethi on the occasion of his 60th Birthday}.\par}

\begin{abstract}
In this paper we give a positive answer to a question of Dimca and Greuel about the quotient between the Milnor and the Tjurina numbers for any irreducible germ of plane curve singularity. This result is based on a closed formula for the minimal Tjurina number of an equisingularity class in terms of the sequence of multiplicities of the strict transform along a resolution. The key points for the proof are previous results by Genzmer \cite{genzmer16}, and by Wall and Mattei \cite{wall2, mattei91}.

\end{abstract}
\section{Introduction}\label{intro}
Let \( (C, \boldsymbol{0}) \) be a germ of an isolated plane curve singularity with equation \( f \in \mathbb{C} \{x,y \} \). Associated to \( (C, \boldsymbol{0}) \) one has the Milnor number \( \mu \) and the Tjurina number \( \tau \) which are defined respectively as
\[ \mu := \dim_{\mathbb{C}} \frac{ \mathbb{C} \{x, y\} }{ (\partial f/ \partial x, \partial f/ \partial y) },\quad  \tau := \dim_{\mathbb{C}} \frac{ \mathbb{C} \{x, y\} }{ (f, \partial f/ \partial x, \partial f/ \partial y) }. \]
The Milnor number is a topological invariant of the singularity. Since topological and equisingular equivalence are the same, the Milnor number is invariant in the equisingularity class of \( (C, \boldsymbol{0}) \). On the other hand, the Tjurina number is an analytic invariant, but not in general topological. Many questions in Singularity Theory turn around knowing to what extent the topology of a germ of a singularity constrains its analytical properties.

\jump

There are different complete topological invariants, which determine the equisingularity class of the curve. For an irreducible curve $(C,\textbf{0})$ we highlight two of these invariants: the semigroup of values obtained as the intersection multiplicity of $(C,\textbf{0})$ and any other germ of curve at the origin, and the sequence of positive multiplicities of any resolution of $(C,\textbf{0})$. As it will be further detailed in section \ref{sec.moduli}, generic curves in the equisingularity class of a plane branch attain a minimal Tjurina number \(\tau_{min}\) which is in fact a topological invariant. However, a closed formula for \( \tau_{min} \) in terms of the topological invariants has not explicitly appeared before in the literature.

\jump

There have been some attempts to give a formula for \(\tau_{min}\). Most of them give a recursive expression of \(\tau_{min}\) in terms of the semigroup of the branch. For the case of one Puiseux pair \((n, m) \) with \(\gcd(n,m)=1\), Zariski proves in \cite[\S VI, pg. 107--114]{zariski-moduli} that the dimension of the generic component  of the moduli space is
\[q_{min} = \frac{(n-3)(m-3)}{2} + \Big[\frac{m}{n}\Big]-1-\mu+\tau_{min}.\]
On the other hand, Delorme in \cite{delorme78} gives a recursive formula for the dimension of the generic component in terms of the continued fraction of \(m/n\). Both results together give a recursive formula for \(\tau_{min}\) in the case of a branch with one Puiseux pair.
For a specific family of branches with two Puiseux pairs with semigroup \(\langle 2p,2q,2pq+d\rangle\) Luengo and Pfister give in \cite{luengo} the closed formula \(\tau_{min}=\mu-(p-1)(q-1)\).

\jump

 Brian\c{c}on, Granger and Maisonobe in \cite{brian} give a recursive formula for the minimal Tjurina number for the specific family of curves \( f = f_0 + g \) which are a deformation of the initial term \( f_0 = y^n - x^m \) such that \( \deg_w(f_0) < \deg_w(g) \), which are known as semi-quasi-homogeneous singularities with weights \( w = (n, m), n, m \geq 2 \).

\jump

Finally, Peraire in \cite{peraire} gives an expression for \(\tau_{min}\) in terms of the generic set of values \(\Delta_{gen}\) of the module of K\"{a}hler differentials for any irreducible plane curve singularity with any number of Puiseux pairs. The inconvenient of her approach is that  \(\Delta_{gen}\) is determined algorithmically an it provides no explicit description for \(\tau_{min}\) in terms of the equisingularity invariants.

\jump

One contribution of this work is a closed formula for \( \tau_{min} \) in the case of a plane branch, see Theorem \ref{thm:formulatau}, in terms of the sequence of multiplicities of the strict transform along a resolution. The key point for the proof of Theorem \ref{thm:formulatau} is a closed formula for the dimension \( q_{min} \) of the generic component of the moduli space of a plane branch given recently by Genzmer in \cite{genzmer16}. Our approach also relies on a formula for the dimension of the miniversal \( \mu \)--constant unfolding of \( f \) due to Wall \cite{wall2} and Mattei \cite{mattei91}.

\jump

The main goal of this paper is to give a positive answer, for any irreducible plane curve singularity, to the following question posed by Dimca and Greuel in \cite{dim}:

\begin{ques} \label{conjecture}
Is it true that \(\mu/\tau < 4/3\) for any isolated plane curve singularity?
\end{ques}

Since \(\tau\) is an upper-semicontinuous invariant, it is enough to prove the inequality for \(\mu/\tau_{min}\). Using the results explained above, in \cite{alblanc}, the second and third named authors showed that Question \ref{conjecture} has a positive answer in two cases: the case of one Puiseux pair and for semi-quasi-homogeneous singularities.

\jump

A few days after our paper was published in arXiv, another positive answer for the Dimca-Greuel question in the case of a plane branch was obtained by Genzmer and Hernandez in \cite{genzmertau}. Although the methods are rather different, the key ingredient is still the formula for the generic dimension of the moduli space obtained in \cite{genzmer16}.

\jump

\textbf{Acknowledgments.} The authors would like to thank Eduard Casas-Alvero, Manuel González-Villa and Ignacio Luengo Velasco for the helpful comments and suggestions. The authors would also like to thank the anonymous referee for helpful comments and an improvement in Corollary \ref{cor:corollary3}.


\section{Preliminaries}
In this section, we will introduce the basic notions and notations of resolution of singularities for an irreducible germ of plane curve singularity \((C,\textbf{0})\), also called plane branch. We refer to \cite{Caslib} and \cite{wall} for a more detailed study.

\jump

Consider \(s(x)=\sum_ja_jx^{j/n}\) the Puiseux series of the branch $(C, \boldsymbol{0})$. The set of characteristic exponents \(\{\beta_1/n,\dots,\beta_g/n\}\) is defined as
 \(\beta_1:=\min\{j\;|\;a_j\neq 0\;\text{and}\;j\notin n\mathbb{Z}\}\),
  \(\beta_{i}:=\min\{j\;|\;a_j\neq 0\;\text{and}\;j\notin e_{i-1}\mathbb{Z}\} \) with \( e_{i-1}:=\gcd(n,\beta_1,\dots,\beta_{i-1})\).
The Puiseux parameterization \((x(t),y(t))= (t^n,s(t^n))\) of the branch defines a morphism
\[\mathcal{O}_{C,\textbf{0}}:=\mathbb{C}\{x,y\}/(f)\xhookrightarrow{\qquad}\mathbb{C}\{t\},\]
and a discrete valuation \( v \) of \(\mathcal{O}_{C,\textbf{0}}\) via
\(v_C(g):=\operatorname{ord}_t(g(x(t),y(t))), g \in \mathcal{O}_{C, \boldsymbol{0}},\) which coincides with the intersection multiplicity of \( (C, \boldsymbol{0}) \) with the germ of curve defined by \( g \). Then, the semigroup of values of \( (C, \boldsymbol{0}) \) is defined as \(\Gamma(C):=\{v_C(g) \in \mathbb{Z}_{\geq 0}\ |\ g\in \mathcal{O}_{C,\textbf{0}}\setminus\{\boldsymbol{0}\}\}\).
The semigroup of \( (C, \boldsymbol{0}) \) is finitely generated, i.e. \(\Gamma(C)=\langle\overline{\beta}_0,\dots,\overline{\beta}_g\rangle\), where the \(\overline{\beta}_i\) can be expressed as \[\overline{\beta}_0:=n,\quad\overline{\beta}_{i+1}=\frac{n-e_1}{e_i}\beta_1+\frac{e_1-e_2}{e_i}\beta_2+\cdots+\frac{e_{i-1}-e_i}{e_i}\beta_i+\beta_{i+1}.\]

Let \(\pi:(S,E)\longrightarrow(\mathbb{C}^2,\mathbf{0})\) be an embedded resolution of \((C,\textbf{0})\), where \(E:=\textrm{Exc}(\pi) \) denotes the exceptional divisor of \( \pi \). The morphism \(\pi\) can be seen as the composition of point blowing up \(\pi=\pi_{\boldsymbol{0}}\circ\cdots\circ\pi_{p_k}\). Here, \(\pi_{p}\) is the blowing up of the point $p$ and \(E_{p}:=\pi_{p}^{-1}(p)\cong\mathbb{P}^{1}\) is called the exceptional divisor of $p$. The divisor \(E_{\boldsymbol{0}}\) is also called the first infinitesimal neighbourhood of the origin. The \(i\)--th neighbourhood of the origin is the set of points in the first neighbourhood of any point in the \((i-1)\)--th neighbourhood of the origin. The points in any of this neighbourhoods are called points infinitely near to the origin. The strict transform \( \widetilde{C}_{p_i} \) of \((C,\textbf{0})\) at \(p_i\) is defined as the Zariski closure of \( (\pi_{\boldsymbol{0}} \circ\cdots\circ\pi_{p_{i-1}} )^{-1}(C\setminus\{\textbf{0}\})\). The multiplicity of \((C,\textbf{0})\) at \(p\) is then \(e_p:= e_p(\widetilde{C}_p)\).

\jump

The points in an embedded resolution of an irreducible germ \((C,\textbf{0})\) are sequentially ordered by the order of neighbourhood of the origin and hence the multiplicities of \((C,\textbf{0})\) make a sequence. In the same way, the positive multiplicities of any resolution of \((C,\textbf{0})\) make a sequence. We will denote by the same symbol  \(E_{p}\) and its strict transforms on $S$ and on any intermediate blown-up surface.
An infinitely near point \(p\) is proximate to \(q\) if \(p \in E_q \). We will say that \(p\) is free (resp. satellite) if it is proximate to exactly one point (resp. two points) equal or infinitely near to the origin. The combinatorics of the resolution, i.e. the proximity relations, can be encoded as different graphs structures all of them equivalent: the Enriques diagram, the labeled dual graph, the Eisenbud-Neumann graph, the Eggers-Wall tree, etc., see \cite{Caslib} and \cite{wall} for more details.

\jump

Given a germ of plane branch \( (C, \boldsymbol{0}) \) there is a classical result which expresses the Milnor number of \( (C,\textbf{0})\) in terms of the sequence of multiplicities of any resolution of \( (C,\textbf{0}), \) see for instance \cite[6.4]{Caslib},
\begin{equation}\label{eq:milnor}
\mu  := \mu (C )= \sum_{p} e_p (e_p-1),
\end{equation}
where the summation runs on all points \( p \) equal or infinitely near to the origin. Both $\mu (C )$ and the sequence of multiplicities $ \{ e_p\}_{p}$ are topological invariants, that is, they are invariants of the equisingularity class of $(C,\textbf{0}) $. Furthermore, any finite sequence of positive $ \{ e_p \}_{ p }$ which includes all the points $p$ along a resolution of $(C,\textbf{0})$ determines the equisingularity class of $(C,\textbf{0}) $.

\section{The monomial curve and its deformations}\label{sec:monomialcurve}

Let \( \Gamma = \langle \overline{\beta}_0, \overline{\beta}_1, \dots, \overline{\beta}_g \rangle \subseteq \mathbb{Z}_{\geq 0} \) be the semigroup of a plane branch. Following Teissier in \cite{teissier-appendix}, let \( (C^\Gamma, \boldsymbol{0}) \subset (\mathbb{C}^{g+1}, \boldsymbol{0}) \) be the curve defined via the parameterization
\[ C^\Gamma : u_i = t^{\overline{\beta}_i}, \qquad 0 \leq i \leq g. \]
The germ \( (C^\Gamma, \boldsymbol{0}) \) is irreducible since \( \gcd(\overline{\beta}_0, \dots, \overline{\beta}_g) = 1 \). Moreover, \( (C^\Gamma, \boldsymbol{0}) \) is a quasi-homogeneous complete intersection, see \cite[I.2]{teissier-appendix}. The monomial curve \( C^\Gamma \) has the following important property:

\begin{theorem}[{\cite[I.1]{teissier-appendix}}]
Every branch \( (C, \boldsymbol{0}) \) with semigroup \( \Gamma \) is isomorphic to the generic fiber of a one parameter complex analytic deformation of \( (C^\Gamma, \boldsymbol{0}) \).
\end{theorem}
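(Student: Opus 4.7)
The plan is to re-embed $(C, \boldsymbol{0})$ inside $(\mathbb{C}^{g+1}, \boldsymbol{0})$ in a way compatible with the quasi-homogeneous $\mathbb{C}^*$-action preserving $C^\Gamma$, and then flow $C$ onto $C^\Gamma$ along that action. For each $0 \leq i \leq g$, pick $g_i \in \mathcal{O}_{C,\boldsymbol{0}}$ with $v_C(g_i) = \overline{\beta}_i$; after rescaling, its image in $\mathbb{C}\{t\}$ satisfies $g_i(t) = t^{\overline{\beta}_i} + (\text{higher-order terms})$, with the canonical choices $g_0 = x$, $g_1 = y$. The resulting surjection $\mathbb{C}\{u_0, \ldots, u_g\} \twoheadrightarrow \mathcal{O}_{C,\boldsymbol{0}}$ with $u_i \mapsto g_i$ realizes a closed embedding $\varphi: (C, \boldsymbol{0}) \hookrightarrow (\mathbb{C}^{g+1}, \boldsymbol{0})$ whose image is parameterized by $t \mapsto (g_0(t), \ldots, g_g(t))$ and still has semigroup $\Gamma$; the point is that $\varphi(C)$ now shares its ambient space with $C^\Gamma$.

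Let $I \subset \mathbb{C}\{u_0, \ldots, u_g\}$ be the defining ideal of $\varphi(C)$, and assign $u_i$ the weight $\overline{\beta}_i$. For each $f \in I$ with weighted expansion $f = f_{d_0} + f_{d_0+1} + \cdots$, with $f_{d_0}$ the nonzero initial form, set
\[ \widetilde{f}(u, s) \;:=\; s^{-d_0}\, f\bigl( s^{\overline{\beta}_0} u_0, \ldots, s^{\overline{\beta}_g} u_g \bigr) \;=\; f_{d_0}(u) + s\, f_{d_0+1}(u) + s^2\, f_{d_0+2}(u) + \cdots . \]
Each $\widetilde{f}$ is holomorphic near $(\boldsymbol{0}, 0)$, and I would take $\mathcal{X} := V\bigl( \widetilde{f} : f \in I \bigr) \subset \mathbb{C}^{g+1} \times \mathbb{C}_s$ as the candidate total space. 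For $s \neq 0$ the substitution $u_i \mapsto s^{\overline{\beta}_i} u_i$ is an automorphism of $\mathbb{C}^{g+1}$, so the fiber of $\mathcal{X} \to \mathbb{C}_s$ over $s$ is isomorphic to $\varphi(C) \cong C$; at $s = 0$ the fiber is cut out by the weighted initial ideal $\operatorname{in}_w(I)$.

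The main obstacle, and the essential content of \cite[I.1]{teissier-appendix}, is to identify $\operatorname{in}_w(I)$ with the ideal $I_0$ of $C^\Gamma$. Since $C^\Gamma$ is a complete intersection cut out by $g$ explicit binomial semigroup relations, each such relation lifts to an element of $I$ whose initial form is that very binomial, giving $I_0 \subseteq \operatorname{in}_w(I)$. For the reverse inclusion one uses that the $g_i$ realize precisely the values of $\Gamma$: any $f \in I$ of weighted order $d$ would yield, upon substituting the parameterization, a cancellation of order $d$ among monomials in $g_0(t), \ldots, g_g(t)$, which by the semigroup structure must already be a consequence of the binomial relations. This identifies the special fiber of $\mathcal{X} \to \mathbb{C}_s$ with $C^\Gamma$; flatness of the family then follows from the fact that all fibers are reduced curves with the same $\delta$-invariant $\delta(\Gamma)$, so $\mathcal{X}$ provides the required one-parameter deformation of $(C^\Gamma, \boldsymbol{0})$ with generic fiber $(C, \boldsymbol{0})$.
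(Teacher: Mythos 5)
The paper does not prove this statement: it is quoted verbatim from Teissier's appendix \cite{teissier-appendix} and used as a black box, so the only meaningful comparison is with Teissier's own argument. Your construction is essentially his: Teissier also re-embeds $(C,\boldsymbol{0})$ in $(\mathbb{C}^{g+1},\boldsymbol{0})$ by choosing elements $\xi_i$ with $v_C(\xi_i)=\overline{\beta}_i$ and then degenerates along the $\mathbb{C}^*$-action with weights $\overline{\beta}_i$; the difference is that he runs the degeneration on the parameterization, setting $u_i(t,s)=s^{-\overline{\beta}_i}\xi_i(st)=t^{\overline{\beta}_i}+s(\cdots)$, so that the special fiber is visibly $C^\Gamma$ and the identification of the limit with the associated graded ring $\operatorname{gr}_v\mathcal{O}_{C,\boldsymbol{0}}\cong\mathbb{C}[\Gamma]$ is immediate, whereas you run it on the defining ideal $I$ and must identify $\operatorname{in}_w(I)$ with $I_0$. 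Both routes work; the parameterization avoids the bookkeeping about initial ideals at the cost of having to argue that the family of parameterized curves is a genuine flat deformation of germs.

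Two soft spots in your version, both repairable. First, you have the difficulty of the two inclusions backwards: $\operatorname{in}_w(I)\subseteq I_0$ is the \emph{easy} direction, since for $f\in I$ of initial weight $d_0$ the lowest-order term of $0=f(g_0(t),\dots,g_g(t))$ is exactly $f_{d_0}(t^{\overline{\beta}_0},\dots,t^{\overline{\beta}_g})$, forcing $f_{d_0}\in\ker(\mathbb{C}[u]\to\mathbb{C}[t])=I_0$; no appeal to the binomial generators is needed, and your sentence ``must already be a consequence of the binomial relations'' is not an argument. The direction requiring content is $I_0\subseteq\operatorname{in}_w(I)$, i.e.\ lifting each binomial $u^a-u^b\in I_0$ to an element of $I$ with that initial form; this uses that $v_C(g^a-g^b)\in\Gamma$ is realized by a monomial of higher weight, so one can correct order by order and converge in $\mathbb{C}\{u\}$ (equivalently, that the weight filtration on $\mathbb{C}\{u\}$ surjects onto the $v$-filtration on $\mathcal{O}_{C,\boldsymbol{0}}$ because the $\overline{\beta}_i$ generate $\Gamma$). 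You also need this filtration statement to identify the scheme-theoretic fiber at $s=0$ with $V(\operatorname{in}_w(I))$ rather than merely containing it. Second, invoking constancy of $\delta$ to get flatness is heavier than necessary and not quite the right tool: over the smooth one-dimensional base $\mathbb{C}_s$, flatness is just torsion-freeness, which holds because the total space is the Rees-algebra degeneration and $s$ is a nonzerodivisor on it (no component of $\mathcal{X}$ lies in the fiber $s=0$). With those two points tightened, your outline is a correct reconstruction of the cited theorem.
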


Therefore, every branch \( (C, \boldsymbol{0}) \) with semigroup \( \Gamma \) is analytically isomorphic to one of the fibers of the miniversal deformation \( G : (X, \boldsymbol{0}) \longrightarrow (D, \boldsymbol{0}) \) of \( C^\Gamma \). The dimension of the base \( D \) of the miniversal deformation of \( (C^\Gamma, \boldsymbol{0}) \) equals \( \mu \), the Milnor number of the equisingularity class of plane branches with semigroup \( \Gamma \), see \cite[Prop. 2.7]{teissier-appendix}.

\jump

After \cite[Thm. 3]{teissier-appendix}, we will denote by \( \tau_{-} \) the dimension of the base \( (D_\Gamma, \boldsymbol{0}) \) of the miniversal constant semigroup deformation of \( (C^\Gamma, \boldsymbol{0}) \). Let us denote by \( (C_{\boldsymbol{v}}, \boldsymbol{0}), \boldsymbol{v} \in D_\Gamma \) any fiber of the miniversal semigroup constant deformation of \( (C^\Gamma, \boldsymbol{0}) \). We will denote by \( \tau(C_{\boldsymbol{v}}) \) the dimension of the base of the miniversal deformation of \( (C_{\boldsymbol{v}}, \boldsymbol{0}) \). Similarly, we denote by \( q(C_{\boldsymbol{v}}) \) the dimension of the base of the miniversal constant semigroup deformation of the fiber \( (C_{\boldsymbol{v}}, \boldsymbol{0}) \).

\jump

By the product decomposition theorem \cite[Addendum 2.1]{teissier-appendix}, the germ of \( D_\Gamma \) at any \( \boldsymbol{v} \) is a product
\[(D_\Gamma, \boldsymbol{v}) \cong (\mathbb{C}^{\mu - \tau(C_{\boldsymbol{v}})} \times D_{\Gamma, \boldsymbol{v}}, \boldsymbol{0}), \]
where \( D_{\Gamma, \boldsymbol{v}} \) is the base of the miniversal constant semigroup deformation of \( (C_{\boldsymbol{v}}, \boldsymbol{0}) \). Thus, one has the following relation, see \cite[\S II.3.4]{teissier-appendix},
\begin{equation} \label{eq:equation0}
\tau(C_{\boldsymbol{v}}) - q(C_{\boldsymbol{v}}) = \mu - \tau_{-}.
\end{equation}

In the same way that \(\mu\) can be expressed in terms of the sequence of multiplicities along the resolution of any plane branch with semigroup \( \Gamma \), see Equation \eqref{eq:milnor}, the same is possible for \( \tau_{-} \) as we are going to show next. Assume now that \( (C_{\boldsymbol{v}}, \boldsymbol{0}), \boldsymbol{v} \in D_\Gamma \) is a plane branch and take \( f \in \mathbb{C}\{x,y\} \) any equation of \( (C_{\boldsymbol{v}}, \boldsymbol{0}) \).

\begin{proposition} \label{prop:tau_minus_unfolding}
The dimension of the miniversal \( \mu \)--constant unfolding of \( f \) equals \( \tau_{-} \).
\end{proposition}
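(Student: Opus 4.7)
The plan is to chain together three results: (i) the Wall--Mattei description of the miniversal $\mu$-constant unfolding, (ii) the equivalence of $\mu$-constant and semigroup-constant deformations for a plane branch, and (iii) the Teissier relation \eqref{eq:equation0}. These assemble directly into the equality $\dim(\text{miniversal $\mu$-constant unfolding of } f) = \tau_{-}$.

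First I would record the general relation between unfoldings and deformations of the germ. The forgetful map from the base of the miniversal unfolding (of dimension $\mu$) to the base of the miniversal deformation of $(C_{\boldsymbol{v}},\boldsymbol{0})$ (of dimension $\tau(C_{\boldsymbol{v}})$) has fibre of dimension $\mu - \tau(C_{\boldsymbol{v}})$, corresponding to the directions along which different unfoldings produce the same deformation of the zero set. By the results of Wall \cite{wall2} and Mattei \cite{mattei91}, this fibration restricts to the $\mu$-constant strata, so
\[
\dim(\text{miniversal $\mu$-constant unfolding of } f) \;=\; \dim(\text{miniversal $\mu$-constant deformation of } (C_{\boldsymbol{v}},\boldsymbol{0})) + (\mu - \tau(C_{\boldsymbol{v}})).
\]

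Next, because $(C_{\boldsymbol{v}},\boldsymbol{0})$ is a plane branch, $\mu$-constant deformations are the same as equisingular deformations, and for an irreducible germ equisingularity is captured precisely by constancy of the semigroup $\Gamma$. Therefore the miniversal $\mu$-constant deformation of $(C_{\boldsymbol{v}},\boldsymbol{0})$ coincides with the miniversal semigroup-constant deformation, whose base has dimension $q(C_{\boldsymbol{v}})$ by the notation set in the preceding paragraphs.

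Substituting this into the displayed equation gives $\dim(\text{miniversal $\mu$-constant unfolding of } f) = q(C_{\boldsymbol{v}}) + \mu - \tau(C_{\boldsymbol{v}})$, and then applying \eqref{eq:equation0}, which reads $\tau(C_{\boldsymbol{v}}) - q(C_{\boldsymbol{v}}) = \mu - \tau_{-}$, yields
\[
q(C_{\boldsymbol{v}}) + \mu - \tau(C_{\boldsymbol{v}}) \;=\; \mu - (\mu - \tau_{-}) \;=\; \tau_{-},
\]
which is the desired equality. The main obstacle is step (i): one must cite Wall--Mattei with the correct statement so that the fibre dimension of the forgetful functor really is $\mu - \tau(C_{\boldsymbol{v}})$ after restriction to the $\mu$-constant locus, and so that the notion of ``miniversal $\mu$-constant unfolding'' used here matches the one for which the formula is proved.
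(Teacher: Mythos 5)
Your argument is correct and follows essentially the same route as the paper's proof: both compare the $\mu$-constant stratum of the miniversal unfolding with that of the miniversal deformation via versality (your ``fibration'' of fibre dimension $\mu-\tau(C_{\boldsymbol{v}})$ is the paper's statement that the two codimensions coincide), then invoke the equivalence of $\mu$-constant and constant semigroup for plane branches and Equation \eqref{eq:equation0}. The only cosmetic difference is that the paper attributes the coincidence of codimensions to general versality of the unfolding as a deformation rather than to a specific Wall--Mattei statement.
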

\begin{proof}
The miniversal unfolding of the equation \( f \) has a base of dimension \( \mu \). Let us denote by \( \vartheta \) the dimension of the base of the miniversal \( \mu \)--constant unfolding of \( f \). The codimension of the \( \mu \)--constant stratum is then \( \mu - \vartheta \). Now, since the miniversal unfolding of \( f \) is a versal deformation of \( (C_{\boldsymbol{v}}, \boldsymbol{0}) \), the codimension of the \( \mu \)--constant strata of both deformations coincide. The curve \( (C_{\boldsymbol{v}}, \boldsymbol{0}) \) being plane implies that \( \mu \)--constant is equivalent to constant semigroup, \cite{le}, and hence, \( \tau(C_{\boldsymbol{v}}) - q(C_{\boldsymbol{v}}) = \mu - \vartheta \). Finally, by Equation \eqref{eq:equation0}, \( \vartheta \) equals \( \tau_{-} \).
\end{proof}

Finally, the dimension of the \(\mu\)--constant stratum of the miniversal unfolding of any reduced \( f \in \mathbb{C}\{x,y\} \), which we will also denote by \( \tau_{-} \) after Proposition \ref{prop:tau_minus_unfolding}, is computed by Mattei \cite{mattei91} and Wall \cite{wall2} in terms of the sequence of multiplicities of the strict transform along an embedded resolution of the germ \( (C, \boldsymbol{0}) \).

\begin{theorem}[{\cite[Thm. 4.2.1]{mattei91}, \cite[Thm. 8.1]{wall2}}]\label{thm:tau_minus}
The dimension of the \(\mu\)--constant stratum of the miniversal unfolding of \( f \) equals
\[ \tau_{-} = \sum_{p} \frac{(e'_p - 2)(e'_p - 3)}{2}, \]
where the summation runs on all points \( p \) equal or infinitely near to the origin and
\begin{enumerate}[(a)]
\item \( e'_p := e_p \) if \( p \) is the origin,
\item \( e'_p := e_p + 1 \) if \( p \) is free and \(e_p>0\),
\item \( e'_p := e_p + 2 \) if \( p \) is satellite and \(e_p>0\),
\item \( e'_p := 2 \) otherwise.
\end{enumerate}
\end{theorem}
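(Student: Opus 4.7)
The plan is to identify $\tau_{-}$ with the dimension of the equisingular deformation functor of $f$ and then to break this dimension into a sum of local contributions at each infinitely near point of the resolution.

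First I would reduce to equisingular deformations. By Proposition \ref{prop:tau_minus_unfolding}, $\tau_{-}$ is the dimension of the $\mu$-constant stratum in the base of the miniversal unfolding of $f$. For plane curve singularities, Lê--Ramanujam \cite{le} (already invoked in Proposition \ref{prop:tau_minus_unfolding}) guarantees that $\mu$-constant is equivalent to equisingular, i.e.\ the sequence of multiplicities $\{e_p\}_p$ of the strict transform along the resolution is preserved along the stratum. Thus $\tau_{-}$ equals the dimension of the functor of equisingular unfoldings of $f$.

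Next I would transfer the problem to the embedded resolution $\pi:(S,E)\to(\mathbb{C}^2,\boldsymbol{0})$. An equisingular unfolding of $f$ lifts, via $\pi$, to a compatible family of equimultiple unfoldings of the strict transform at every infinitely near point $p$, subject to the proximity constraints: the moving strict transform must pass through each center of blowing up with the correct multiplicity, and the tangent cone at $p$ must avoid the components of $E$ through $p$. Appealing to Wahl's theorem on the smoothness (unobstructedness) of the equisingular stratum for plane curves, together with the vanishing of higher cohomology of the sheaf of infinitesimal equisingular deformations on a tree of rational curves, one obtains a decomposition
\[ \tau_{-} \;=\; \sum_{p} \tau_{-,p}, \]
where $\tau_{-,p}$ is the local contribution at each point $p$ equal or infinitely near to the origin.

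Then I would compute $\tau_{-,p}$ by a jet-counting argument. At $p$ with multiplicity $e_p>0$, the local contribution is the dimension of the space of analytic jets of a curve of multiplicity $e_p$ at $p$, modulo (i) the action of the local automorphism group fixing the components of $E$ through $p$ and (ii) the equimultiplicity condition. A free point has one fixed divisor, a satellite point has two fixed divisors, and the origin has none; the resulting jet dimensions are exactly $\binom{e_p-2}{2}$, $\binom{e_p-1}{2}$ and $\binom{e_p}{2}$ respectively, which are encoded uniformly as $\binom{e'_p-2}{2}$ with $e'_p$ as in the statement. Points on the resolution tree that lie on $E$ but with $e_p=0$ (the strict transform has already been separated from the exceptional divisor there) contribute nothing, consistent with the convention $e'_p=2$ giving $\binom{0}{2}=0$. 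Summing over all $p$ yields the stated formula.

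The main obstacle I expect is Step 2, namely the local-to-global decomposition: one must justify carefully that the global equisingular functor on $f$ splits as a direct sum of local deformation functors at each infinitely near point, with no cross-terms. This is the content of Wall's analysis in \cite{wall2} and Mattei's foliation-theoretic reformulation in \cite{mattei91}, and it rests on the tree structure of the dual graph of $\pi$, which makes the relevant Čech cohomology vanish. Once this decomposition is in place, the local jet computations in Step 3 are routine.
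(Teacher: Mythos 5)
First, a point of reference: the paper does not prove Theorem \ref{thm:tau_minus} at all --- it is imported verbatim from Mattei and Wall --- so there is no in-paper argument to measure your proposal against. Your outline does follow the line that those sources (and Wahl's work on equisingular deformations) actually take: identify the $\mu$-constant stratum with the equisingular stratum via L\^{e}--Ramanujam \cite{le}, use smoothness of that stratum so that its dimension is $\dim_{\mathbb{C}} I^{es}/j(f)$, and compute the latter by localizing on the resolution. As a roadmap this is sound.

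As a proof, however, there are two genuine gaps. The decisive step --- that the equisingular deformation functor splits as a sum of local contributions at the infinitely near points, with no cross terms --- is essentially the content of the theorem being proved, and you dispose of it by citing \cite{wall2} and \cite{mattei91}, i.e.\ the very sources the theorem is attributed to; as a proof of that theorem this is circular. The local jet count in your Step 3 is likewise asserted rather than derived, and as written it is internally inconsistent: your ``respectively'' attaches $\binom{e_p-2}{2}$ to free points, $\binom{e_p-1}{2}$ to satellite points and $\binom{e_p}{2}$ to the origin, whereas items (a)--(c) of the statement require $\binom{e_p-2}{2}$ at the origin, $\binom{e_p-1}{2}$ at free points and $\binom{e_p}{2}$ at satellite points. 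Your own heuristic actually forces the correct assignment --- the more exceptional components must be preserved, the fewer coordinate changes are available and the more jet parameters survive, so the satellite contribution must be the largest and the origin's the smallest --- so this is presumably a listing slip; still, the one place where the proposal ties the geometry to the numbers does not, as written, reproduce the formula. To turn the sketch into a proof one would need (i) the vanishing/splitting statement for the sheaf of infinitesimal equisingular deformations along the exceptional tree, and (ii) the explicit normal-form computation at each type of point, neither of which is carried out here.
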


{\setlength{\parindent}{0cm}
\textbf{Remark.} It might be worth noticing that the quantity on the left (or right) hand-side of Equation \eqref{eq:equation0} coincides with the codimension \( \tau^{es} \) of the equisingularity ideal \( I^{es} \) of \( (C_{\boldsymbol{v}}, \boldsymbol{0}) \), see for instance \cite{greuel-book} and the references therein. Similarly, it can be seen that \(\tau_{-}\) also equals the modality of \(f\) in the sense of Wall \cite{wall2}.

\section{The dimension of the generic component of the moduli space} \label{sec.moduli}

Following the notations from the last section, let us begin this section by defining the moduli space of plane branches with semigroup \( \Gamma \). Analytically equivalence of germs induces an equivalence relation \( \thicksim \) in \( D_\Gamma \). The topological space \( D_\Gamma / \thicksim \), with the quotient topology, will be denoted by \( \widetilde{M}_\Gamma \) and it is called the \emph{moduli space} associated to the semigroup \( \Gamma \). Let \( m : D_\Gamma \longrightarrow \widetilde{M}_\Gamma \) be the natural projection and let \( D^{(2)}_\Gamma \) be the following subset of \( D_\Gamma \)
\[ D^{(2)}_\Gamma := \{ \boldsymbol{v} \in D_\Gamma \ |\ (G^{-1}(\boldsymbol{v}), \boldsymbol{0})\ \textrm{is a plane branch} \}.  \]
Then, Teissier proves in \cite{teissier-appendix} that \( D^{(2)}_\Gamma \) is an analytic open dense subset of \( D_\Gamma \) and that \( m(D^{(2)}_\Gamma) \) is the moduli space \( M_\Gamma \) of plane branches with semigroup \( \Gamma \) in the sense of Zariski \cite{zariski-moduli}. Moreover, \( \widetilde{M}_\Gamma = M_\Gamma \) if and only if \( \Gamma \) is generated by two elements.

\jump

Following \cite{zariski-moduli}, we define the generic curve \( C_v \) of the moduli space \( \widetilde{M}_\Gamma \) as the fiber corresponding to the generic point \( v \) in the base space \( D_\Gamma \) of the miniversal semigroup constant deformation. By the discussion in the previous paragraph, \( C_v \) is a plane branch and coincides with the generic fiber defined in \( M_\Gamma \). Since \( \tau(C_v) \) coincides with the dimension of the Tjurina algebra of \( (C_{\boldsymbol{v}}, \boldsymbol{0}) \) and it is upper-semicontinuous, we can define \( \tau_{min} := \tau(C_v) \). Similarly, after Equation \eqref{eq:equation0}, it makes sense to define \( q_{min} := q(C_v) \).

\jump

The quantity \( q_{min} \) is the dimension of the generic component of the moduli space \( \widetilde{M}_\Gamma \) of branches with semigroup \( \Gamma \), see \cite[Thm. 6]{teissier-appendix}. Therefore, after applying Equation \eqref{eq:equation0} to the generic branch \( (C_v, \boldsymbol{0}) \), see \cite[\S II.3.6]{teissier-appendix},
\begin{equation} \label{eq:equation1}
  \tau_{min} - q_{min} = \mu - \tau_{-},
\end{equation}
and, after Theorem \ref{thm:tau_minus}, computing \( q_{min} \) is equivalent to computing \( \tau_{min} \).

\jump

Recently, Genzmer in \cite{genzmer16}, computed the dimension of the generic component of the moduli space \( q_{min} \) for any plane branch \( (C, \boldsymbol{0}) \) in terms of the sequence of multiplicities of the strict transform along the minimal embedded resolution of the germ \( (C, \boldsymbol{0}) \). With the same notations from Theorem \ref{thm:tau_minus},

\begin{theorem}[\cite{genzmer16}] \label{thm:dimension-generic}
The dimension of the generic component \( q_{min} \) of the moduli space of plane branches with semigroup \(\Gamma\) equals
\[ q_{min} = \sum_{p} \sigma(e'_p), \]
where the summation runs on all points \( p \) equal or infinitely near to the origin and
\[
 \sigma(k) := \left\{
\begin{array}{ll}
\displaystyle \frac{(k-2)(k-4)}{4},\ \textrm{if \( k \) is even}, \vspace{3mm} \\
\displaystyle \frac{(k-3)^2}{4},\ \textrm{if \( k \) is odd}.
\end{array} \right.
\]
\end{theorem}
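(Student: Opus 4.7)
The plan is to attack the formula as a local-to-global problem indexed by the infinitely near points of the minimal embedded resolution. The natural first move is to pass from curves to the foliations defined by $df = 0$, since two plane branches with the same semigroup are analytically equivalent iff the associated foliations are, and foliations transform cleanly under blow-ups. In particular, by Teissier's construction and Equation~\eqref{eq:equation1} it suffices to describe the tangent space to the moduli of such foliations at a generic point and show that its dimension breaks up as a sum of local contributions $\sigma(e'_p)$.

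The second step is to localize the infinitesimal moduli along the resolution. After pulling back the foliation to the minimal resolution, the infinitesimal deformations modulo infinitesimal analytic equivalences should decompose into data supported at each infinitely near point $p$: a deformation of the local normal form of the foliation at $p$, modulo the group of changes of coordinates that preserve the exceptional divisors through $p$ and the proximity combinatorics. The appearance of $e'_p$ is then bookkeeping for these constraints: a free point imposes the tangency of one exceptional component (hence the shift $+1$), a satellite point imposes two (the shift $+2$), and points with $e_p = 0$ still carry a residual positional datum that the definition encodes as $e'_p = 2$.

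The third step is to compute the local contribution at each $p$. Once the problem is reduced to classifying a binary form of degree $k := e'_p$ modulo a suitable group of coordinate changes preserving a prescribed set of distinguished tangent lines, the count is an invariant-theoretic calculation: under a $GL_2$-type action a generic orbit has codimension $k-3$, plus corrections coming from the generic stabilizer. The parity dichotomy in $\sigma(k)$ should arise because a generic binary form of even degree admits an extra discrete involutive symmetry compatible with the tangent data, while a form of odd degree does not, producing $(k-2)(k-4)/4$ in the even case and $(k-3)^2/4$ in the odd case. The sanity checks $\sigma(2) = \sigma(3) = \sigma(4) = 0$ match the fact that points of small multiplicity contribute no moduli.

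The main obstacle, in my view, is the additivity in Step~2: one must prove that infinitesimal changes of coordinates cannot trade moduli between different infinitely near points, i.e.\ that the natural map from the direct sum of local tangent spaces into the global tangent space of the moduli at a generic point is an isomorphism. This is essentially a Kodaira–Spencer-type statement along the resolution tree, and it seems to be the technical heart of any such proof. Once this independence is established, summing the local contributions $\sigma(e'_p)$ over all infinitely near points $p$ delivers the formula, and Equation~\eqref{eq:equation1} together with Theorem~\ref{thm:tau_minus} provides an independent consistency check against the known closed cases (one Puiseux pair, semi-quasi-homogeneous).
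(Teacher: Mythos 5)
This statement is quoted from Genzmer \cite{genzmer16} and the paper offers no proof of it, so there is nothing internal to compare your attempt against; judged on its own terms, your proposal is a strategy outline with the two decisive steps missing, and one of them is set up in a way that cannot work. The localization/additivity claim of your Step~2 --- that the tangent space to the moduli at a generic point splits as a direct sum of contributions at the infinitely near points --- is precisely the content of the theorem and you correctly identify it as the technical heart, but you give no argument for it; asserting that a Kodaira--Spencer-type statement ``should'' hold along the resolution tree is not a proof, and in Genzmer's actual argument this decomposition comes from computing \( H^1 \) of the sheaf of vector fields tangent to the pulled-back foliation via a cover adapted to the exceptional divisor, which requires genuine work (vanishing statements and a genericity argument).

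More seriously, the mechanism you propose in Step~3 is quantitatively inconsistent with the formula you are trying to prove. The space of binary forms of degree \( k \) has dimension \( k+1 \), so a generic orbit of a \( GL_2 \)-type action has codimension \( k-3 \) plus bounded corrections: this is \emph{linear} in \( k \). But \( \sigma(k) = \lfloor (k-3)^2/4 \rfloor \) grows \emph{quadratically} in \( k \) (e.g.\ \( \sigma(7)=4 \), \( \sigma(9)=9 \), \( \sigma(11)=16 \)), so no orbit-codimension count in a space of binary forms of degree \( k \) can produce it; the local parameter space must itself have dimension growing quadratically (in Genzmer's computation it is essentially a count of lattice points in a triangle of side \( \sim k/2 \)). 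Relatedly, your explanation of the parity dichotomy via an ``extra discrete involutive symmetry'' for even degree is invented: a finite symmetry changes an orbit dimension by \( 0 \), and in any case \( (k-3)^2/4 \) and \( (k-2)(k-4)/4 \) differ by \( 1/4 \), i.e.\ the dichotomy is just the floor of \( (k-3)^2/4 \) taken according to parity --- an artifact of splitting an integer count into two nearly equal halves, not a stabilizer correction. As it stands the proposal neither proves the additivity nor computes the local contribution, so it does not constitute a proof of the theorem.
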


\section{The minimal Tjurina number of an equisingularity class}

In this section, we present a closed formula for the minimal Tjurina number $\tau_{min}$ of an equisingularity class of a plane branch \( (C, \boldsymbol{0}) \) in terms of the sequence of multiplicities $ \{ e_p \}_{ p }$. From the formula for the \( \tau_{min} \), some consequences will be inferred, including the main result of this paper which is a positive answer to Dimca and Greuel's question on the quotient \(\mu/\tau\) of any plane branch.

\begin{theorem}\label{thm:formulatau}
For any equisingular class of germs of irreducible plane curve singularity,
\begin{equation*}
\begin{split}
  \tau_{min} = \sigma(n) + \frac{n^2+3n-6}{2} + & \sum_{p\, \textrm{free}} \frac{(e_p - 1)(e_p + 2) + 2\sigma(e_p + 1)}{2} \\
  + & \sum_{p\, \textrm{sat.}} \frac{e_p(e_p - 1) + 2 \sigma(e_p + 2)}{2},
\end{split}
\end{equation*}
where the summation runs on all points \( p \) equal or infinitely near to the origin.
\end{theorem}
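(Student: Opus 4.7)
The plan is to combine the three ingredients already at hand: Equation \eqref{eq:equation1}, which rearranges to $\tau_{min} = q_{min} + \mu - \tau_{-}$; Genzmer's formula (Theorem \ref{thm:dimension-generic}) for $q_{min}$; the Mattei--Wall formula (Theorem \ref{thm:tau_minus}) for $\tau_{-}$; and the classical formula \eqref{eq:milnor} for $\mu$. All three sums are indexed by the points $p$ equal or infinitely near to the origin, so the natural approach is to work pointwise and reorganise the total contribution according to whether $p$ is the origin, a free point with $e_p>0$, a satellite point with $e_p>0$, or one of the remaining points with $e_p=0$.

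First I would verify that points with $e_p=0$ contribute nothing. In this case $e'_p=2$ by Theorem \ref{thm:tau_minus}(d), and then $(e'_p-2)(e'_p-3)/2=0$, $e_p(e_p-1)=0$, and $\sigma(2)=0$. Hence such points can be discarded, and the sum reduces to the origin plus free and satellite points with positive multiplicity.

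Next, for each of the three remaining types I would compute the local contribution
\[ \sigma(e'_p) + e_p(e_p-1) - \frac{(e'_p-2)(e'_p-3)}{2}. \]
For the origin, $e'_p=n$ and routine algebra gives $2n(n-1)-(n-2)(n-3)=n^2+3n-6$, producing $\sigma(n)+(n^2+3n-6)/2$. For a free point with $e_p>0$, $e'_p=e_p+1$, and $(e_p-1)[e_p-(e_p-2)/2] = (e_p-1)(e_p+2)/2$, yielding the free summand $[(e_p-1)(e_p+2)+2\sigma(e_p+1)]/2$. For a satellite point with $e_p>0$, $e'_p=e_p+2$, the $\tau_{-}$-term equals $e_p(e_p-1)/2$, so the contribution becomes $[e_p(e_p-1)+2\sigma(e_p+2)]/2$.

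Summing these three contributions over the corresponding sets of points reproduces the stated closed form. No serious obstacle is expected: the whole argument is a bookkeeping step combining three already-proved formulas, followed by elementary case-by-case simplification. The only subtlety is the separate treatment of the origin, which is forced on us by Theorem \ref{thm:tau_minus}(a) (where the convention is $e'_p=e_p$ rather than $e_p+1$) and must be kept distinct from the sum over free points throughout; one should also make sure that the index ``free'' in the final formula is understood to exclude the origin itself so as not to double count.
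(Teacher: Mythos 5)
Your proposal is correct and follows exactly the paper's argument: substitute Genzmer's formula for $q_{min}$, the Mattei--Wall formula for $\tau_{-}$, and the multiplicity formula for $\mu$ into $\tau_{min} = q_{min} + \mu - \tau_{-}$, then simplify pointwise by the type of $p$. The paper leaves this bookkeeping implicit ("the claimed formula follows"), whereas you have written it out; your case-by-case computations, including the vanishing of the contribution at points with $e_p = 0$, all check out.
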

\begin{proof}
Using Equation \eqref{eq:equation1} we have \( \tau_{min} = q_{min} + \mu - \tau_{-},\) from Theorems \ref{thm:tau_minus} and \ref{thm:dimension-generic} the claimed formula follows.
\end{proof}

 A fortiori, we can see from Theorem \ref{thm:formulatau} that the \( \tau_{min} \) of an equisingularity class depends only on the minimal resolution of \( (C, \boldsymbol{0}) \) and not on the minimal embedded resolution. Furthermore, the formula works for any resolution of \( (C, \boldsymbol{0}) \), minimal or not.
\jump

This formula for \(\tau_{min}\) enables us to give a positive answer to Dimca and Greuel's question, see Question \ref{conjecture}, in the case of any plane branch. Before proving this, we need the following property of the sequence of multiplicities.

\begin{lemma} \label{lemma:1}
For any plane branch singularity of multiplicity $n$,
\[ \sum_{p\, \textrm{sat.}} e_p = n - 1, \]
where the summation runs on all satellite infinitely near points \( p \).
\end{lemma}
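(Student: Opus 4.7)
My strategy is to combine the Noether--Enriques proximity equalities with a telescoping sum argument. I label the infinitely near points of the minimal embedded resolution of $(C, \boldsymbol{0})$ sequentially as $p_0 = \boldsymbol{0}, p_1, \ldots, p_N$, with $p_{i+1} \in E_{p_i}$. The minimality of the embedded resolution, together with the fact that three smooth curves through a single point would violate the normal crossings condition, forces $p_N$ to be free and $e_{p_N} = 1$. The main tool is the proximity equality for branches: for every $0 \leq i \leq N - 1$,
$$ e_{p_i} = \sum_{q \to p_i} e_q, $$
see e.g. \cite[Ch. 3]{Caslib}.

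Next I would record the partition structure of the satellite set. A satellite point is, by definition, proximate to two distinct points; since at most two exceptional divisors pass through any infinitely near point along the resolution of a branch, if $p_k$ is a satellite ($2 \leq k \leq N$) then $p_k$ is proximate to its immediate predecessor $p_{k-1}$ and to exactly one other point $p_{i(k)}$ with $i(k) \leq k - 2$. Grouping satellites by the value of $i(k)$ produces a disjoint decomposition of the satellite set, and the group indexed by $i$ coincides with $\{p_k : k \geq i + 2,\, p_k \to p_i\}$.

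To conclude, I would isolate the term corresponding to $p_{i+1}$ in each proximity equality. This gives $n - e_{p_1} = \sum_{k \geq 2,\, p_k \to p_0} e_{p_k}$ at $i = 0$ and $e_{p_i} - e_{p_{i+1}} = \sum_{k \geq i + 2,\, p_k \to p_i} e_{p_k}$ for $1 \leq i \leq N - 1$. Summing over $i = 0, \ldots, N - 1$ and invoking the partition of satellites, the right-hand sides assemble into a telescoping sum:
$$ \sum_{p \text{ sat.}} e_p = (n - e_{p_1}) + \sum_{i = 1}^{N - 1}(e_{p_i} - e_{p_{i+1}}) = n - e_{p_N} = n - 1. $$
The only point requiring care is the bookkeeping: one must verify that the partition of the satellites is exhaustive and disjoint, and apply the proximity equality only at the points $p_0, \ldots, p_{N-1}$ where it is valid, with the boundary value $e_{p_N} = 1$ supplying the $-1$ in the final answer.
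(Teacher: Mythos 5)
Your argument is correct, but it follows a genuinely different route from the paper's. The paper deduces the identity from two facts about the characteristic exponents: Enriques' theorem, which gives $n + \sum_{p\,\textrm{free}} e_p = \beta_g$, and the classical formula $\sum_p e_p = \beta_g + n - 1$; subtracting the first from the second and peeling off the origin yields $\sum_{p\,\textrm{sat.}} e_p = n-1$ in two lines. You instead work directly with the proximity equalities $e_{p_i} = \sum_{q \to p_i} e_q$ and telescope, using that each satellite is proximate to exactly one non-consecutive predecessor; this bypasses the characteristic exponents and Enriques' theorem entirely and is, in that sense, more self-contained (it also makes transparent why the answer is $e_{p_0} - e_{p_N}$). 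Two small points of bookkeeping deserve attention. First, your claim that the last point $p_N$ is \emph{free} is not correct if $p_N$ denotes the last centre blown up in the minimal embedded resolution of a singular branch: that point is the last satellite (e.g.\ for $y^2 - x^3$ the centres have multiplicities $2,1,1$ and the third is satellite). Fortunately your argument only uses $e_{p_N} = 1$, which does hold, and a satellite $p_N$ is still correctly collected on the right-hand side of the telescoping sum, so nothing breaks; you should simply drop or correct the freeness claim (or declare $p_N$ to be the point of the strict transform immediately after the last blow-up, in which case it is free of multiplicity $1$). Second, the truncation of the proximity equalities at index $N$ needs the observation that every point of the branch beyond the resolution is free, so no point of index $> N$ is proximate to any $p_i$ with $i \le N-1$; you allude to this but it is worth stating explicitly, as it is exactly the remark the paper makes when it says that all satellite points with positive multiplicity occur among the centres of the minimal embedded resolution.
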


\begin{proof}
Consider the finite sequence of positive multiplicities $ \{ e_p \}_{ p }$ along the minimal embedded resolution of the plane branch.
From Enriques' theorem \cite[Thm. 5.5.1]{Caslib} one has that \( n + \sum_{p\, \textrm{free}} e_p = \beta_g \), where \(\beta_g/n\) is the last characteristic exponent. On the other hand, from \cite[Ex. 5.6]{Caslib}, \( \sum_{p} e_p = \beta_g + n - 1, \) where this summation runs on all points \( p \) equal or infinitely near to the origin.
Since all the satellite points for which $e_p$ is positive are included in the sequence of points blown-up in the minimal embedded resolution, the result follows.
\end{proof}

Finally, we get the announced positive answer to Dimca and Greuel's question as a Corollary of Theorem \ref{thm:formulatau}.

\begin{corollary}\label{cor:4/3}
For any plane branch singularity, \[ \frac{\mu}{\tau} < \frac{4}{3}. \]
\end{corollary}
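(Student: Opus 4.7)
The plan is to reduce the corollary to a positive-integer inequality involving only the sequence of multiplicities, and then compare term by term using the formulas for $\mu$ and $\tau_{min}$ together with Lemma \ref{lemma:1}. Since $\tau$ is upper-semicontinuous on the equisingularity class, we have $\tau\ge \tau_{min}$, and so it is enough to establish $4\tau_{min}-3\mu>0$. The idea is to use Equation \eqref{eq:milnor} for $\mu$ and Theorem \ref{thm:formulatau} for $\tau_{min}$ to expand $4\tau_{min}-3\mu$ as a sum indexed by the origin, the free infinitely near points, and the satellite infinitely near points, and then to show that the origin term alone dominates the total deficit contributed by satellite points.

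The first step is the local computation. For the origin, I compute $4\bigl[\sigma(n)+\tfrac{n^2+3n-6}{2}\bigr]-3n(n-1)$; using the two cases in the definition of $\sigma$, this simplifies to $3n-4$ when $n$ is even and to $3n-3$ when $n$ is odd. For a free point of multiplicity $e_p=k\ge1$, the contribution $2(k-1)(k+2)+4\sigma(k+1)-3k(k-1)$ simplifies (again splitting by parity) to $k$ if $k$ is even and to $k-1$ if $k$ is odd; in particular it is always nonnegative. For a satellite point of multiplicity $k\ge 1$, the contribution $-k(k-1)+4\sigma(k+2)$ simplifies to $-k$ if $k$ is even and to $-(k-1)$ if $k$ is odd; this is the only source of negativity, and it is bounded below by $-e_p$.

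Combining these local computations and discarding the nonnegative free contributions, I obtain
\[
4\tau_{min}-3\mu \;\ge\; \bigl(\text{origin contribution}\bigr)\;-\;\sum_{p\,\textrm{sat.}} e_p.
\]
At this point Lemma \ref{lemma:1} steps in to replace $\sum_{p\,\text{sat.}}e_p$ by $n-1$, yielding $4\tau_{min}-3\mu\ge (3n-4)-(n-1)=2n-3$ in the even case and $\ge(3n-3)-(n-1)=2n-2$ in the odd case. Both are strictly positive as soon as $n\ge 2$ (and the case $n=1$ corresponds to a smooth branch, where the statement is vacuous), which yields $\mu/\tau_{min}<4/3$ and hence the corollary.

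The only real obstacle is the parity bookkeeping in Step 1: the formula in Theorem \ref{thm:formulatau} mixes the quantities $\sigma(n)$, $\sigma(e_p+1)$, $\sigma(e_p+2)$, each of which is defined by cases according to the parity of its argument, so one has to check that in every parity pattern the local contribution of a free point is nonnegative and the local contribution of a satellite point is at least $-e_p$. Once that is verified, Lemma \ref{lemma:1} closes the argument cleanly, so the decisive structural input is Enriques' constraint $\sum_{p\,\text{sat.}}e_p=n-1$, which prevents the satellite deficit from ever exceeding the quadratic gain produced at the origin.
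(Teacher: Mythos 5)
Your proof is correct and follows essentially the same route as the paper: reduce to $4\tau_{min}-3\mu>0$, decompose into origin, free and satellite contributions, and invoke Lemma \ref{lemma:1} to bound the satellite deficit by $n-1$. The only difference is cosmetic: the paper replaces your parity-by-parity evaluation of the local terms with the single uniform inequality $\sigma(k)\geq (k-2)(k-4)/4$, which yields the same bounds ($3n-4$ at the origin, $e_p-1$ at free points, $-e_p$ at satellite points) in one line.
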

\begin{proof}
It is enough to prove the inequality for the \( \tau_{min} \) of each equisingularity class of plane branches. We will show that \(4\tau_{min}-3\mu>0\). From Theorem \ref{thm:formulatau} and Equation \eqref{eq:milnor} we have that
\begin{equation*}
\begin{split}
  4\tau_{min}-3\mu  =4 \sigma(n) - n^2 + 9n - 12  &  + \sum_{p\,\textrm{free}} \left( 4 \sigma(e_p + 1) - (e_p-1)(e_p-4) \right) \\
 &+ \sum_{p\, \textrm{sat.}} \left( 4 \sigma(e_p + 2) - e_p(e_p - 1) \right).
\end{split}
\end{equation*}
Now, since \( \sigma(k) \geq (k-2)(k-4)/4 \),
\[ 4 \tau_{min} - 3\mu \geq 3n -4 + \sum_{\substack{p\, \textrm{free}\\ e_p > 0}} (e_p - 1) - \sum_{p\, \textrm{sat}.} e_p. \]
Finally, using Lemma \ref{lemma:1}
\[ 4 \tau_{min} - 3\mu \geq 2n -3 + \sum_{\substack{p\, \textrm{free}\\ e_p >0}} (e_p - 1) > 0, \]
and the result follows, since \(n\geq 2\).
\end{proof}
As a direct consequence of Theorem \ref{thm:formulatau} we also obtain the following lower bound for \(\tau\):
\begin{corollary} \label{cor:corollary3}
For any plane branch,
\[
\begin{split}
  \tau \geq  \frac{3 n^2}{4}-1 \quad \textnormal{if} \quad n\ \textnormal{is even},\\
  \tau \geq  \frac{3}{4}(n^2-1) \quad \textnormal{if} \quad n\ \textnormal{is odd}.
\end{split}
\]
\end{corollary}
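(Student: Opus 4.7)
My plan is to derive the bound as a direct corollary of the closed formula for $\tau_{min}$ given in Theorem \ref{thm:formulatau}, by discarding the non-negative contributions of the points infinitely near to the origin and computing the contribution of the origin itself in each parity of $n$. Since $\tau \geq \tau_{min}$, it is enough to establish the inequality with $\tau$ replaced by $\tau_{min}$.

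The key observation is that every summand in the two sums of Theorem \ref{thm:formulatau} is non-negative. For a free point $p$ with $e_p \geq 1$, both $(e_p - 1)(e_p + 2)$ and $\sigma(e_p + 1)$ are non-negative, using that $\sigma(k) \geq 0$ for $k \geq 2$. For a satellite point $p$ with $e_p \geq 1$, both $e_p(e_p - 1)$ and $\sigma(e_p + 2)$ are non-negative. Free or satellite points with $e_p = 0$ contribute trivially, just as in the proof of Corollary \ref{cor:4/3}. Dropping these summands from the formula of Theorem \ref{thm:formulatau} gives
\[ \tau_{min} \geq \sigma(n) + \frac{n^2 + 3n - 6}{2}. \]

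It then only remains to substitute the explicit value of $\sigma(n)$ according to the parity of $n$. If $n$ is even, $\sigma(n) = (n-2)(n-4)/4$, so
\[ \sigma(n) + \frac{n^2 + 3n - 6}{2} = \frac{(n^2 - 6n + 8) + (2n^2 + 6n - 12)}{4} = \frac{3n^2}{4} - 1; \]
and if $n$ is odd, $\sigma(n) = (n-3)^2/4$, so
\[ \sigma(n) + \frac{n^2 + 3n - 6}{2} = \frac{(n^2 - 6n + 9) + (2n^2 + 6n - 12)}{4} = \frac{3(n^2 - 1)}{4}. \]
I do not foresee any substantive difficulty here: once Theorem \ref{thm:formulatau} is in hand, the corollary reduces to an elementary non-negativity check together with an arithmetic identity in each parity.
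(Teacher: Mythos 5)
Your proof is correct and is exactly the argument the paper intends: the corollary is stated as a direct consequence of Theorem \ref{thm:formulatau}, obtained by discarding the non-negative summands over the infinitely near points and evaluating $\sigma(n) + \tfrac{n^2+3n-6}{2}$ according to the parity of $n$. Your non-negativity check and the arithmetic in both parities are accurate.
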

The bound in Corollary \ref{cor:corollary3} is sharp, as one can easily check for generic curves in the equisingularity class of the singularities \( y^n -x^{n+1} = 0, \) i.e. the minimal Tjurina number in the equisingularity class of the singularities \( y^n -x^{n+1} = 0 \) coincides with the bound of Corollary \ref{cor:corollary3}. In fact, from Theorem \ref{thm:formulatau} one can see that these are the only topological types of singular plane branches for which the bound is reached. 

\jump

 Finally, we are going to analyze the set of values of K\"{a}hler differentials that we will denote by \(\Delta(C):=v(\mathcal{O}_{C,\textbf{0}}+\mathcal{O}_{C,\textbf{0}}(dy/dx))\) . It is easy to check that \(\Gamma(C)\) acts on addition in \(\Delta(C)\), hence \(\Delta(C)\) is a \(\Gamma(C)-\)monoid. It is proven by Delorme in \cite{delorme78} that \(\Delta(C)\) is an analytical invariant of the singularity. The set of K\"{a}hler differentials is used to define some of the main numerical analytical invariants of plane curves singularities, for example the so called Zariski exponent. Peraire's procedure to compute \(\tau_{min}\) is closely related to the invariant \(\Delta(C)\). For the generic curve \((C_{v},\textbf{0})\) of the moduli space of plane branches with semigroup \( \Gamma \), see Section \ref{sec.moduli}, Peraire gives an algorithm to compute, from the semigroup of values \(\Gamma\), a set of generators for \(\Delta_{gen}:=\Delta(C_v).\) From this, Peraire gives the following formula for the \( \tau_{min} \),
 \begin{equation}
 \label{eqn:Peraire}
 \tau_{min}=\frac{\mu}{2}+n-1+|\mathbb{N}\setminus\Delta_{gen}|,
 \end{equation}
 where \( |\cdot| \) denotes the cardinal.
 \jump

 From the formula from Equation \eqref{eqn:Peraire} and Theorem \ref{thm:formulatau}, we can infer the following:

 \begin{corollary}\label{cor:gaps}
 For any equisingularity class of plane branch singularities the cardinal of the gaps of $\Delta_{gen}$ is
  \[|\mathbb{N}\setminus\Delta_{gen}| =\sigma(n) + n-2 +  \sum_{p\, \textrm{free}} ((e_p - 1) + \sigma(e_p + 1))   +  \sum_{p\, \textrm{sat.}} \sigma(e_p + 2),
 \]
  where the summation runs on all points \( p \) equal or infinitely near to the origin.
 \end{corollary}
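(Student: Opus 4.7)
The plan is a direct substitution. Peraire's identity \eqref{eqn:Peraire} rearranges to
\[ |\mathbb{N}\setminus\Delta_{gen}| = \tau_{min} - \frac{\mu}{2} - (n-1), \]
so it suffices to plug in the closed expression for \(\tau_{min}\) from Theorem \ref{thm:formulatau} together with the classical formula \eqref{eq:milnor} for \(\mu\), and to simplify the result locally at each infinitely near point.

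To keep the bookkeeping clean, I would split \(\mu = \sum_p e_p(e_p-1)\) into three groups of summands according to whether \(p\) is the origin, a free infinitely near point, or a satellite infinitely near point (points with \(e_p = 0\) contribute zero on both sides and can be ignored). Each of these three groups of summands coming from \(\mu/2\) is then paired with the analogous group appearing in Theorem \ref{thm:formulatau}, so the identity collapses to three independent algebraic simplifications, one per type of point.

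For the origin, combining \(\sigma(n) + \frac{n^2+3n-6}{2}\) with \(-\frac{n(n-1)}{2}\) cancels the quadratic part, leaving \(\sigma(n) + 2n - 3\). For a free point, the key identity \((e_p-1)(e_p+2) - e_p(e_p-1) = 2(e_p-1)\) turns the bracketed expression from Theorem \ref{thm:formulatau} into \((e_p-1) + \sigma(e_p+1)\). For a satellite point, the two copies of \(\frac{e_p(e_p-1)}{2}\) cancel outright, leaving only \(\sigma(e_p+2)\). Finally, subtracting the residual \(n-1\) from the origin contribution \(\sigma(n) + 2n - 3\) yields \(\sigma(n) + n - 2\), matching the claimed formula exactly.

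There is no genuine obstacle here: the statement is a formal algebraic consequence of three prior ingredients, namely Theorem \ref{thm:formulatau}, Peraire's formula \eqref{eqn:Peraire}, and the Milnor formula \eqref{eq:milnor}. The only risk is arithmetic: one must track the factors of \(1/2\) in the displayed expression for \(\tau_{min}\) and not double-count the origin as a free point. Once the three local simplifications are carried out correctly, the corollary is immediate.
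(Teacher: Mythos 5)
Your proposal is correct and is exactly the argument the paper intends: the paper gives no explicit proof, merely stating that the corollary is inferred from Peraire's formula \eqref{eqn:Peraire} and Theorem \ref{thm:formulatau}, and your substitution of the closed formula for \(\tau_{min}\) together with \eqref{eq:milnor}, simplified pointwise by type of infinitely near point, is precisely that computation. All three local simplifications check out, so nothing further is needed.
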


\jump
\jump


\begin{thebibliography}{99}

\bibitem{alblanc} P.~Almir\'{o}n, G.~Blanco, \textit{A note on a question of Dimca and Greuel},  C. R. Math. Acad. Sci. Paris,  Ser. I \textbf{357} (2019), 205--208.

\bibitem{brian} J.~Brian\c{c}on, M.~Granger, Ph.~Maisonobe, \textit{Le nombre de modules du germe de courbe plane \(x^a+y^b=0\)}, Math. Ann. \textbf{279} (1988), 535--551.

\bibitem{Caslib} E.~Casas-Alvero, \textit{Singularities of plane curves}, London Math. Soc. Lecture Note Ser. \textbf{276}, Cambridge
Univ. Press, Cambridge, 2000.

\bibitem{delorme78} C.~Delorme, \textit{Sur les modules des singularit\'{e}s des courbes planes}, Bull. Soc. Math. France \textbf{106} (1978), 417--446.

\bibitem{dim} A.~Dimca, G.-M.~Greuel, \textit{On 1-forms on isolated complete intersection on curve singularities}, J. of Singul. \textbf{18} (2018), 114--118.

\bibitem{genzmer16} Y.~Genzmer, \textit{Dimension of the moduli space of a curve in the complex plane}, Preprint in: \href{https://arxiv.org/pdf/1610.05998.pdf}{arXiv:1610.05998} (2016).

 \bibitem{genzmertau} Y. Genzmer, M. E. Hernandes, \textit{On the Saito’s basis and the Tjurina Number for Plane Branches}, Preprint in: \href{https://arxiv.org/pdf/1904.03645.pdf}{arXiv:1904.03645} (2019).

\bibitem{greuel-book} G.-M.~Greuel, C.~Lossen, E.~Shustin, \textit{Introduction to Singularities and Deformations}, Springer Monographs in Mathematics, Berlin, 2007.

\bibitem{le} L.~D.~Tráng, C.~P.~ Ramanujam, \textit{The invariance of Milnor's number implies the invariance of the topological type}, Amer. J. Math. \textbf{98} (1976), no. 1, 67--78.

\bibitem{luengo} I.~Luengo, G.~Pfister, \textit{Normal forms and moduli spaces of curve singularities with semigroup \(\langle 2p,2q,2pq+d\rangle\)}, Compos. Math. \textbf{76} (1990), no.~1--2, 247--264.

\bibitem{mattei91} J.~F.~Mattei, \textit{Modules de feuilletages holomorphes singuliers: {I} équisingularité}, Invent. Math. \textbf{103} (1991), no. 2, 297--325.

\bibitem{peraire} R.~Peraire, \textit{Tjurina number of a generic irreducible curve singularity}, J. Algebra \textbf{196} (1997), no.1, 114--157.

\bibitem{wall2}  C.~T.~C.~Wall, \textit{Notes on the classification of singularities}, Proc. London Math. Soc. \textbf{48} (1984), no.3, 461--513.

\bibitem{wall} C.~T.~C.~Wall, \textit{Singular points of plane curves}, London Math. Soc. Students Texts \textbf{63}, Cambridge
Univ. Press, Cambridge, 2004.

\bibitem{teissier-appendix} B.~Teissier, \textit{Appendix}, in \cite{zariski-moduli}, 1986.

\bibitem{zariski-moduli} O.~Zariski, \textit{Le probl\'{e}me des modules pour les branches planes}, Hermann, Paris, 1986.

\end{thebibliography}
\end{document}